\newtheorem{theorem}{Theorem}[section]
\newtheorem{lemma}[theorem]{Lemma}
\newtheorem{proposition}[theorem]{Proposition}
\newtheorem{corollary}[theorem]{Corollary}
\newcommand{\gcovere}{{\rm gcover_{e}}}
\newcommand{\gparte}{{\rm gpart_{e}}}
\newcommand{\BF}{{\rm BF}}
\newcommand{\BN}{{\rm BN}}
\newcommand{\cp}{\,\square\,}
\newcommand{\gp}{{\rm gp}}
\newcommand{\gpe}{{\rm gp_{e}}}
\newcommand{\diam}{{\rm diam}}
\def\cp{\,\square\,}
\begin{document}
\title{Generalization of edge general position problem}

\author{
Paul Manuel$^{a}$
\and
R. Prabha$^{b}$
\and
Sandi Klav\v zar$^{c,d,e}$
}

\date{}

\maketitle
\vspace{-0.8 cm}
\begin{center}
	$^a$ Department of Information Science, College of Computing Science and Engineering, Kuwait University, Kuwait \\
{\tt pauldmanuel@gmail.com}\\
\medskip

$^b$ Department of Mathematics, Ethiraj College for Women, Chennai, Tamilnadu, India \\
{\tt prabha75@gmail.com}\\
\medskip

$^c$ Faculty of Mathematics and Physics, University of Ljubljana, Slovenia\\
{\tt sandi.klavzar@fmf.uni-lj.si}\\
\medskip

$^d$ Faculty of Natural Sciences and Mathematics, University of Maribor, Slovenia\\
\medskip

$^e$ Institute of Mathematics, Physics and Mechanics, Ljubljana, Slovenia\\
\medskip

\end{center}

\begin{abstract}
The  edge geodesic cover problem of a graph $G$ is to find a smallest number of  geodesics that cover the edge set of $G$. The edge $k$-general position problem is introduced as the problem to find a largest set $S$ of edges of $G$ such that no $k-1$ edges of $S$ lie on a common geodesic. We study this dual min-max problems and connect them to an edge geodesic partition problem. Using these  connections, exact values of the edge $k$-general position number is determined for different values of $k$ and for different networks including  torus networks, hypercubes, and Benes networks. 
\end{abstract}

\noindent{\bf Keywords}: general position set;  edge geodesic cover problem; edge $k$-general position problem; torus network, hypercube, Benes network 

\medskip
\noindent{\bf AMS Subj.\ Class.~(2020)}: 05C12, 05C76

%%%%%%%%%%%%%%%%%%%%%%%%%%%%%%%
\section{Introduction}
%%%%%%%%%%%%%%%%%%%%%%%%%%%%%%%

Dual min-max invariant combinatorial problems are central topics in graph theory and more generally in combinatorics, cf.~\cite{AzBu16}. Here we consider an instance of such dual problems, the edge geodesic cover problem and the edge general position problem, where we will use the first as a tool to study the second. 

An \textit{edge geodesic cover} of $G$ is a set $S$ of geodesics such that each edge of $G$ belongs to at least one geodesic of $S$. The \textit{edge geodesic cover number} of $G$, $\gcovere(G)$, is the minimum cardinality of an edge geodesic cover of $G$. The \textit{edge geodesic cover problem} is to find a minimum cardinality edge geodesic cover of $G$, cf.~\cite{Manuel18}. An \textit{edge geodesic  partition} of $G$ is a set $S$ of geodesics such that each edge of  $G$ belongs to exactly one geodesic of $S$. The \textit{edge geodesic partition number} of $G$, $\gparte(G)$, is the minimum cardinality of an edge geodesic partition of $G$. The \textit{edge geodesic partition problem} is to find a minimum cardinality edge geodesic partition of $G$. A survey on edge geodesic cover and partition problems up to 2018 can be found in~\cite{Manuel18}. 

For $k\ge 3$, we introduce the edge $k$-general position sets as follows. An \textit{edge $k$-general position set} (\textit{edge $k$-gp set} for short) is a set $S$ of edges of $G$ such that no $k$ edges of $S$ lie on a common geodesic, that is, $|S \cap E(P)|\leq k-1$ for any geodesic $P$ of $G$.  An edge $k$-general position set of maximum cardinality in $G$ is called an {\em edge $k$-$\gp$ set}. Iits cardinality is denoted by $k$-$\gpe(G)$ and called the {\em edge $k$-gp number}. An \textit{edge $k$-general position problem} is to find an edge $k$-$\gp$ set. The edge $3$-general position problem is known as the {\em edge general position problem} and was studied for the first time in~\cite{MaPr21}. The corresponding invariant is called the {\em $gp_e$-number} of $G$ and denoted by $\gpe(G)$. The related (vertex) general position problem has already been extensively studied, see~\cite{bijo-2019, klavzar-2021a, klavzar-2021b, manuel-2018a, patkos-2020, tian-2021a, tian-2021b, ullas-2016}. 

The main objective of this paper to demonstrate that the edge geodesic cover problem and the edge $k$-general position problem form a pair of dual min-max combinatorial problems. To do so, we first establish some basic results in Section~\ref{sec:gpe-matching}. The advantage of dual min-max invariant combinatorial problems is that one problem can be solved by means of the other problem. In this paper we apply this approach on the above-mentioned problems. In Section \ref{sec:edge-torus} we determine the edge $k$-gp number for torus graphs $C_{2^r}\cp C_{2^r}$ and $k = 2^t + 1$. Then, in Section~\ref{sec:pc}, we demonstrate that partial cubes contain large edge $k$-gp sets and prove that the edge $k$-gp number of a hypercube $Q_d$ is $ (k-1)2^{d-1}$. In Section~\ref{sec:edge-benes} we solve the edge $k$-gp problem for Benes networks for $k\in \{3, 5\}$. In the rest of the introduction, we give some further definitions needed. 

Let $P_n$ denote the path on $n$ vertices and $C_n$ the cycle on $n$ vertices. The distance $d_G(u, v)$ between vertices $u$ and $v$ of $G$ is the number of edges on a shortest $u, v$-path. Shortest paths are also known as \textit{isometric paths} or \textit{geodesics}. The \textit{diameter} $diam (G)$ of $G$ is the maximum distance between vertices of $G$. A \textit{diametral path} is an isometric path whose length is equal to the diameter of $G$. If $X,Y\subseteq V(G)$, then $d_G(X,Y) = \min_{x\in X, y\in Y} d_G(x,y)$. If $H$ and $H'$ are subgraphs of $G$, then $d_G(H,H') = d_G(V(H),V(H'))$. In this manner, if $e, f\in E(G)$, then $d_G(e,f)$ is the minimum distance between a vertex of $e$ and a vertex of $f$. 

%%%%%%%%%%%%%%%%%%%%%%%%%%%%%%%%
\section{A few preliminary results}
\label{sec:gpe-matching}
%%%%%%%%%%%%%%%%%%%%%%%%%%%%%%%%

In this section we present some preliminary  results on edge $k$-general position sets. We first show that if the diameter of a graph is at most $2k-2$, then the matchings of the graph of cardinality $k$ coincide with edge $k$-general position sets. (Recall that a \textit{matching} of $G$ is a set of independent edges of $G$.)

\begin{proposition}
\label{TMatchEdgek-GenPos}
	Let $G$ be a graph and $k\ge 3$. Then $\diam(G) \le 2k-2$ if and only if every matching of size $k$ is an edge $k$-general position set.
\end{proposition}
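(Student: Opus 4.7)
My plan is to prove the two implications by contrapositive/direct argument, using the key numerical fact that $k$ independent edges occupy $2k$ distinct vertices, so any path containing all of them must have length at least $2k-1$.

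For the forward direction ($\diam(G)\le 2k-2 \Rightarrow$ every matching of size $k$ is an edge $k$-gp set), I would take a matching $M$ of size $k$ in $G$ and suppose, toward a contradiction, that some geodesic $P$ contains all $k$ edges of $M$. Since the edges of $M$ are pairwise vertex-disjoint, their endpoints comprise $2k$ distinct vertices of $P$, so the underlying path $P$ has at least $2k$ vertices and hence length at least $2k-1$. This contradicts $\diam(G)\le 2k-2$, since the length of any geodesic is at most $\diam(G)$. Hence no geodesic of $G$ contains $k$ edges of $M$, i.e., $|M\cap E(P)|\le k-1$ for every geodesic $P$, so $M$ is an edge $k$-general position set.

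For the reverse direction, I would prove the contrapositive: if $\diam(G)\ge 2k-1$, then there is a matching of size $k$ that is not an edge $k$-gp set. Take a diametral geodesic $P$ of length $\diam(G)\ge 2k-1$ with consecutive edges $e_1,e_2,\ldots,e_{\diam(G)}$, and let $M=\{e_1,e_3,e_5,\ldots,e_{2k-1}\}$. These are alternating edges along a path, so they form a matching, and $|M|=k$. Yet all $k$ edges of $M$ lie on the single geodesic $P$, so $|M\cap E(P)|=k>k-1$, showing $M$ is not an edge $k$-general position set.

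Neither direction really has a hard step; the whole statement essentially packages the observation that a matching of size $k$ along a geodesic forces the geodesic to have length at least $2k-1$, and conversely that a geodesic of length $2k-1$ always contains such a matching. The only point that needs a touch of care is choosing the alternating edges correctly in the reverse direction so that exactly $k$ of them fit in a path of $2k-1$ edges, which is immediate from indexing by odd integers from $1$ to $2k-1$.
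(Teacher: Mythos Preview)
Your proof is correct and follows essentially the same approach as the paper's: both directions use the observation that $k$ independent edges on a path force the path to have length at least $2k-1$, and conversely pick alternating edges from a long diametral geodesic. The only cosmetic difference is that the paper selects every second edge of $P$ and then passes to a subset of size $k$, whereas you directly name the edges $e_1,e_3,\ldots,e_{2k-1}$.
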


\begin{proof}
Suppose that $\diam(G) \le 2k-2$.  Let $M$ be an arbitrary matching of $G$ of order $k$ and assume that the edges from $I$ lie on a common geodesic $P$. Since $M$ is a matching, the length of $P$ is at least $2k-1$ and so $\diam(G) \geq  2k-1$ would hold. As this contradicts our assumption we get that $M$ is an edge $k$-general position set.

Conversely, suppose that every matching of order $k$ is an edge $k$-general position set. Assume on the contrary that $\diam(G) \geq 2k-1$ and let $P$ be a geodesic of length $\diam(G)$. Selecting every second edge of $P$ we construct a matching $M$ of order at least $k$. Let $M'$ be a subset of $M$ with $|M'| = k$. As $M'$ is a matching, by our assumption we have that $M'$ is an edge $k$-general position set, but this is not the case as all the edges from $M'$ lie on $P$. 
\end{proof}

A $j$-geodesic is a geodesic of length $j$. 
The following proposition is a useful tool to prove that a given set of edges is an edge general position set. 

\begin{proposition}
\label{pro:verifying-k-gpe}
Let $S$ be a set of edge-disjoint geodesics in $G$ each of length $j$ and let $\ell = \min_{P,Q\in S} d_G(P,Q)$. If $k\ge 2$ and $d_G(P,Q)  < \ell(k-1) + j(k-2)$ holds for every $P,Q \in S$ such that $P$ and $Q$ lie in a common geodesic, then no $k$ paths from $S$ lie on a common geodesic.
\end{proposition}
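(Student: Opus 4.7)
The plan is a proof by contradiction. Assume there are $k$ distinct paths from $S$, say $P_1, \dots, P_k$, all lying on a common geodesic $R$ of $G$. Since the elements of $S$ are pairwise edge-disjoint, the $P_i$'s are edge-disjoint subpaths of $R$ and can therefore be totally ordered along $R$; after relabelling, assume this order is $P_1, P_2, \dots, P_k$. The strategy is to derive a lower bound on $d_G(P_1, P_k)$ that contradicts the hypothesis applied to the pair $P_1, P_k$, which lie on the common geodesic $R$.

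The central observation is that every subpath of the geodesic $R$ is itself a geodesic in $G$. In particular, for each $t \in \{1, \dots, k-1\}$, the length $g_t$ of the portion of $R$ strictly between $P_t$ and $P_{t+1}$ equals the $G$-distance between the relevant endpoints of $P_t$ and $P_{t+1}$, so $g_t \ge d_G(P_t, P_{t+1}) \ge \ell$. Likewise, for any $u \in P_1$ and $v \in P_k$, the $R$-subpath from $u$ to $v$ is a geodesic, so $d_G(u,v)$ equals its length; this minimum over $u,v$ is attained by taking the endpoints of $P_1$ and $P_k$ that are closest along $R$, and if $D$ denotes that minimum then $d_G(P_1, P_k) = D$.

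To conclude, the minimising $R$-subpath decomposes into the intermediate subpaths $P_2, \dots, P_{k-1}$ (of total length $(k-2)j$) and the $k-1$ gaps $g_1, \dots, g_{k-1}$ (of total length at least $(k-1)\ell$), which gives $D \ge (k-2)j + (k-1)\ell = \ell(k-1) + j(k-2)$. Combined with the hypothesis $d_G(P_1, P_k) < \ell(k-1) + j(k-2)$, this yields the desired contradiction. The only point requiring care is the repeated appeal to the fact that a subpath of a geodesic is again a geodesic, so that lengths measured along $R$ really are distances in $G$; everything else is a straightforward additive accounting along $R$.
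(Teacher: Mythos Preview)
Your proof is correct and follows essentially the same route as the paper: assume $k$ paths $P_1,\dots,P_k$ lie in order on a common geodesic, then bound $d_G(P_1,P_k)$ from below by $(k-2)j+(k-1)\ell$ via the intermediate paths and gaps, contradicting the hypothesis applied to $P_1,P_k$. If anything, you are slightly more explicit than the paper in justifying why edge-disjointness permits a total order along $R$ and why lengths along $R$ coincide with $G$-distances.
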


\begin{proof}
Suppose on the contrary that the paths $P_1,\ldots, P_k$ from $S$ lie (in this order) on a common geodesic. Then
\begin{align*}
d_G(P_1,P_k) & = \sum_{i=1}^{k-2}(d_G(P_i,P_{i+1}) + j) + d_G(P_{k-1},P_{k}) \\
& = \sum_{i=1}^{k-1}d_G(P_i,P_{i+1}) + j(k-2) \\
& \ge \ell(k-1) + j(k-2),
\end{align*} 
a contradiction since $P_1$ and $P_k$ lie on a common geodesic and we have assumed that then $d_G(P_1,P_k)  < \ell(k-1) + j(k-2)$ holds.
\end{proof}

Setting $j=1$ in Proposition~\ref{pro:verifying-k-gpe} we have the following consequence. 

\begin{corollary}
	\label{cor:verifying-k-gpe1}
Let $S\subseteq E(G)$, $\ell = \min_{e,f\in S}d_G(e,f)$, and $L = \max_{e,f\in S}d_G(e,f)$. If $L  < \ell(k-1) + (k-2)$, then $S$ is an edge $k$-general position set.
\end{corollary}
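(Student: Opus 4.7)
The plan is to derive the corollary as a direct specialization of Proposition~\ref{pro:verifying-k-gpe} with $j=1$. First I would observe that each edge $e \in E(G)$ is trivially a geodesic of length $1$, and that the elements of $S$, being distinct edges, form a family of pairwise edge-disjoint $1$-geodesics. Hence $S$ satisfies the structural hypothesis of Proposition~\ref{pro:verifying-k-gpe}. The parameter $\ell = \min_{e,f\in S} d_G(e,f)$ in the corollary then matches verbatim the parameter $\ell$ appearing in Proposition~\ref{pro:verifying-k-gpe}.

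Next I would verify the distance inequality required by Proposition~\ref{pro:verifying-k-gpe}. For \emph{any} pair $e, f \in S$ (and in particular for any pair lying on a common geodesic) we have $d_G(e,f) \le L$ by the definition of $L$. The corollary's assumption $L < \ell(k-1) + (k-2)$ therefore immediately gives
\[
d_G(e,f) \;\le\; L \;<\; \ell(k-1) + (k-2) \;=\; \ell(k-1) + j(k-2)
\]
with $j = 1$, which is precisely the hypothesis needed to apply Proposition~\ref{pro:verifying-k-gpe}. That proposition then concludes that no $k$ of the $1$-geodesics in $S$ lie on a common geodesic, i.e.\ no $k$ edges of $S$ lie on a common geodesic, which is the definition of an edge $k$-general position set.

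There is no genuine obstacle here; the whole argument is a one-line instantiation. The only minor conceptual point worth a sentence is that the corollary's universal bound $L$ automatically subsumes the conditional bound (``for every $P,Q \in S$ lying on a common geodesic'') appearing in the proposition, so no case analysis on which pairs are collinear on a geodesic is required.
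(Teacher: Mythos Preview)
Your proposal is correct and matches the paper's approach exactly: the paper simply states that the corollary follows by setting $j=1$ in Proposition~\ref{pro:verifying-k-gpe}, and you have spelled out precisely how that specialization works.
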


We conclude the section by the following simple, yet fundamental inequalities comparing $\gpe(G)$, $\gcovere(G)$, and $\gparte(G)$. The result establishes how the edge geodesic cover problem and the edge general position problem constitute dual min-max combinatorial problems.

\begin{lemma}
\label{lem:gpe-gcovere-gparte}
If $G$ is a graph and $k\ge 3$,	then 
$$k{\mbox -}\gpe(G) \leq (k-1) \cdot \gcovere(G) \leq (k-1) \cdot \gparte(G)\,.$$
\end{lemma}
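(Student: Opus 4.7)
The second inequality is essentially definitional: every edge geodesic partition of $G$ is in particular an edge geodesic cover, since requiring that every edge lie on \emph{exactly} one geodesic in the collection trivially implies that every edge lies on \emph{at least} one. Hence $\gcovere(G) \leq \gparte(G)$, and multiplying through by $k-1$ yields the right-hand inequality.

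For the first inequality, the plan is a straightforward double-counting argument using the two defining properties. Let $S$ be an edge $k$-general position set of maximum size, so $|S| = k\text{-}\gpe(G)$, and let $\mathcal{C}$ be an edge geodesic cover of $G$ of minimum size, so $|\mathcal{C}| = \gcovere(G)$. Since $\mathcal{C}$ covers every edge of $G$, in particular every edge of $S$ lies on at least one geodesic $P \in \mathcal{C}$, so
\[
S \;\subseteq\; \bigcup_{P \in \mathcal{C}} (S \cap E(P)).
\]
On the other hand, because $S$ is an edge $k$-general position set, the defining bound $|S \cap E(P)| \leq k-1$ holds for \emph{every} geodesic $P$ of $G$, and in particular for each $P \in \mathcal{C}$. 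Summing (with multiplicity) gives
\[
|S| \;\leq\; \sum_{P \in \mathcal{C}} |S \cap E(P)| \;\leq\; (k-1)\,|\mathcal{C}| \;=\; (k-1)\,\gcovere(G),
\]
which is the desired inequality.

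There is no real obstacle here; the proof is essentially a bookkeeping step that makes the duality between the two invariants explicit. The only point worth flagging is that the union above may be a multiset (an edge of $S$ might be covered by several geodesics in $\mathcal{C}$), but this overcounting only strengthens the upper bound, so no refinement of the argument is needed.
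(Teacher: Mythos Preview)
Your proof is correct and matches the paper's approach: for the left inequality you use that each geodesic of a minimum cover contains at most $k-1$ edges of an edge $k$-gp set, and for the right inequality that every partition is a cover. Your version simply spells out the counting more explicitly than the paper does.
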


\begin{proof}
Each geodesic from a geodesic cover can contain at most $k-1$ edges from an edge $k$-general position set. Hence the left inequality. The right inequality follows because a geodesic partition is a geodesic cover, cf.~\cite{Manuel18}.  
\end{proof}

%%%%%%%%%%%%%%%%%%%%%%%%%%%%%%%%
\section{The edge $k$-gp problem for torus}
\label{sec:edge-torus}
%%%%%%%%%%%%%%%%%%%%%%%%%%%%%%%%

The {\em Cartesian product} $G\cp H$ of graphs $G$ and $H$ is defined on the vertex set $V(G\cp H) = V(G)\times V(H)$, vertices $(g,h)$ and $(g',h')$ are adjacent if either $gg'\in E(G)$ and $h=h'$, or $g=g'$ and $hh'\in E(H)$, see the book~\cite{imklra-08} for more information on this graph operation.  Cartesian products of cycles $C_n\cp C_m$ are known as {\em torus graphs}. As in this paper we will consider only these products, we simplify the general terminology for products as follows. Two edges $x=(x_1,x_2)$ and $y=(y_1,y_2)$ of a torus is said to be {\em parallel} if $d(x_1,y_2) = d(x_2,y_1)=d(x_1,y_1) +1= d(x_2,y_2) +1$. The edges of $C_n\cp C_m$ that project on the edges of the first factor will be called {\em horizontal edges}, while the edges that project on $C_m$ are {\em vertical edges}. Analogously we will speak of {\em horizontal cycles} (copies of $C_n$ in the product)  and of \textit{vertical cycles}. 

\begin{lemma}
\label{thm:Cycle-gp_e}
If $r\ge 3$ and $2^t \le 2^{r-1} - 2$, then 
$$(2^t+1){\mbox -}\gpe(C_{2^r}) = 2^{t+1}\,.$$
\end{lemma}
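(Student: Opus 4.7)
My plan is to prove matching upper and lower bounds. For the upper bound, Lemma~\ref{lem:gpe-gcovere-gparte} with $k=2^t+1$ gives $(2^t+1){\mbox -}\gpe(C_{2^r})\le 2^t\cdot \gcovere(C_{2^r})$. The two antipodal half-cycles joining a pair of antipodal vertices of $C_{2^r}$ are geodesics of length $2^{r-1}$ and together partition the edge set, while a single geodesic cannot suffice since $2^{r-1}<2^r$; hence $\gcovere(C_{2^r})=2$, yielding $(2^t+1){\mbox -}\gpe(C_{2^r})\le 2^{t+1}$.

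For the lower bound I would exhibit an explicit edge $(2^t+1)$-general position set of size $2^{t+1}$. Label the vertices of the cycle $v_0,\ldots,v_{2^r-1}$ and the edges cyclically as $e_i=v_iv_{i+1}$ (indices mod $2^r$), put $d=2^{r-t-1}$, and take
\[
S=\{e_{jd}:0\le j\le 2^{t+1}-1\}.
\]
The hypothesis $2^t\le 2^{r-1}-2$ forces $t\le r-2$, so $d\ge 2$, $|S|=2^{t+1}$, and the edges of $S$ are evenly spaced on the edge-cycle with gap exactly $d$.

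To see that $S$ is a $(2^t+1)$-gp set, I would argue that a geodesic of $C_{2^r}$ has length at most $2^{r-1}$ and therefore corresponds to a cyclic arc of at most $2^{r-1}$ consecutive edges. Because $S$ is evenly spaced with gap $d$, any cyclic arc of length $L$ contains at most $\lfloor(L-1)/d\rfloor+1$ elements of $S$ (its intersection with $S$ is a consecutive block in the cyclic ordering of $S$, so if it has $n$ elements its span in the edge-cycle is $(n-1)d\le L-1$). For $L\le 2^{r-1}$ this is bounded by $\lfloor(2^{r-1}-1)/d\rfloor+1=2^t$, so no geodesic contains $2^t+1$ edges of $S$, and combining with the upper bound yields the claimed equality.

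The main obstacle is the tight equality case $L=2^{r-1}$ of the counting step, where the divisibility $d\mid 2^{r-1}$ (automatic since $d=2^{r-t-1}$) is essential to guarantee $\lfloor(2^{r-1}-1)/d\rfloor=2^t-1$ rather than $2^t$; without this divisibility one would pick up an unwanted extra chosen edge and get only the bound $2^t+1$. A slicker alternative that avoids explicit counting is to apply Proposition~\ref{pro:verifying-k-gpe} with $j=1$ and $\ell=d-1$: one checks that any two edges of $S$ lying on a common geodesic are at distance at most $2^{r-1}-d-1$, which is strictly less than $\ell(k-1)+(k-2)=(d-1)\cdot 2^t+(2^t-1)=2^{r-1}-1$, and the proposition then delivers the general-position property at once.
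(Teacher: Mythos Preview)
Your proof is correct and follows essentially the same route as the paper: the upper bound via Lemma~\ref{lem:gpe-gcovere-gparte} together with $\gcovere(C_{2^r})=2$, and the lower bound via the same evenly spaced set of $2^{t+1}$ edges with consecutive edge-distance $2^{r-t-1}-1$. The only cosmetic difference is in verifying that $S$ is an edge $(2^t+1)$-gp set: the paper argues by contradiction, computing that $2^t+1$ consecutive chosen edges would span a path of length $2^{r-1}+1>\diam(C_{2^r})$, whereas you phrase the same computation as a direct count $\lfloor(2^{r-1}-1)/d\rfloor+1=2^t$ (and correctly isolate the divisibility $d\mid 2^{r-1}$ as the crux). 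Your alternative via Proposition~\ref{pro:verifying-k-gpe} is also valid and is a nice observation not in the paper; note that Corollary~\ref{cor:verifying-k-gpe1} itself would just miss here (one gets $L=2^{r-1}-1=\ell(k-1)+(k-2)$ with equality), so it is essential that you invoke the proposition and restrict to pairs lying on a common geodesic, which excludes the antipodal pair and drops the maximum distance to $2^{r-1}-d-1$.
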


\begin{proof}
Set $k = 2^t+1$. Since $\gcovere(C_{2^r}) = \gparte(C_{2^r}) = 2$, Lemma~\ref{lem:gpe-gcovere-gparte} implies that it is enough to show that $k$-$\gpe(C_{2^r}) \geq  2(k-1)$. That is, we need to construct an  edge $k$-general position set $S$ of $C_{2^r}$ with $|S| = 2k - 2 = 2^{t+1}$. We proceed to construct such a set $S$.
	
Let $V(C_{2^r}) = \{v_1,v_2, \ldots, v_{2^r}\}$. Define $S$ to be the set containing edges $e_i=u_{i}v_{i}$, $i\in [2^{t+1}]$, where the edges are equidistant on $C_{2^r}$ , that is, we select them such that  
$$d_{C_{2^r}}(e_i, e_{i+1}) = 2^{r-t-1}-1$$ 
holds for all $i\in [2^{t+1}-1]$, cf.\ Fig~\ref{fig:cycle-gpe}.  

	\begin{figure}[ht!]
		\centering
		\includegraphics[scale=0.55]{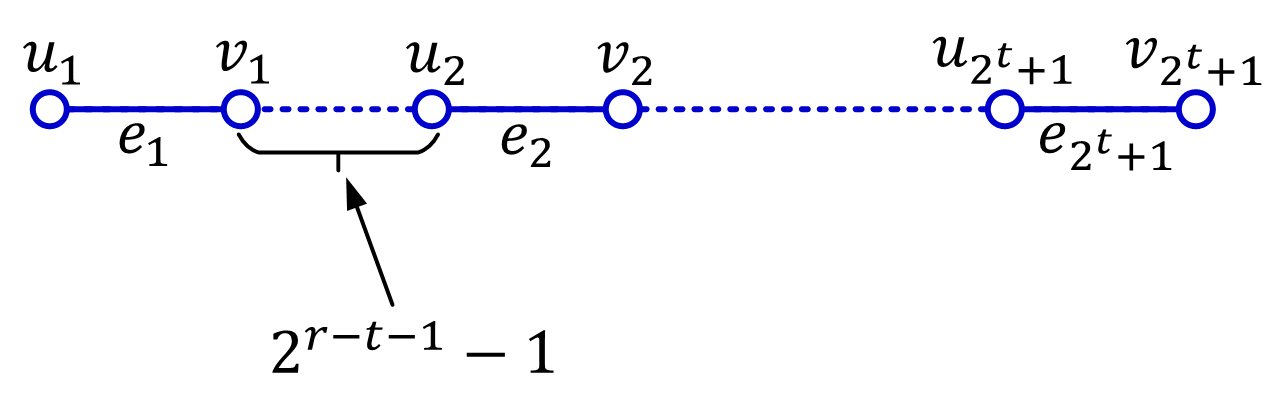}
		\caption{The edges $e_1, e_2 \cdots e_{2^{t}+1}$ of $S'$ lie on geodesic $P$. The end-vertices of the edge $e_i$ are $u_{i}$ and $v_{i}$, such that $u_{1}, v_{1}, u_{2}, v_{2}, \ldots, u_{2^{t}+1}, v_{2^{t}+1}$ are in the increasing order. We have $d(e_i, e_{i+1}) = 2^{r-t-1}-1$.}
		\label{fig:cycle-gpe}
	\end{figure}
	
We claim that $S$ is an edge $k$-general position set. If not, then there exists a subset $S'$ of $S$ such that $|S'| = k = 2^t+1$ and the edges of $S'$ lie on a common geodesic $P$. By the equidistant distribution of the edges from $S$ we may without loss of generality assume that $S' = \{e_1, e_2, \ldots, e_{2^{t}+1}\}$.  As $P$ is a geodesic which contains $2^{t}+1$ edges with the distance $2^{r-t-1}-1$ between the consecutive ones, we have 
$$d_{C_{2^r}}(u_1,v_{2^{t}+1}) = (2^t + 1) + 2^t(2^{r-t-1}-1) = 2^{r-1} + 1\,,$$
a contradiction because $\diam(C_{2^{r-1}}) = 2^{r-1}$.
\end{proof}

Note that Lemma~\ref{thm:Cycle-gp_e} provides an example where both inequalities of Lemma~\ref{lem:gpe-gcovere-gparte} are attained.

\begin{figure}[ht!]
	\centering
	\includegraphics[scale=0.5]{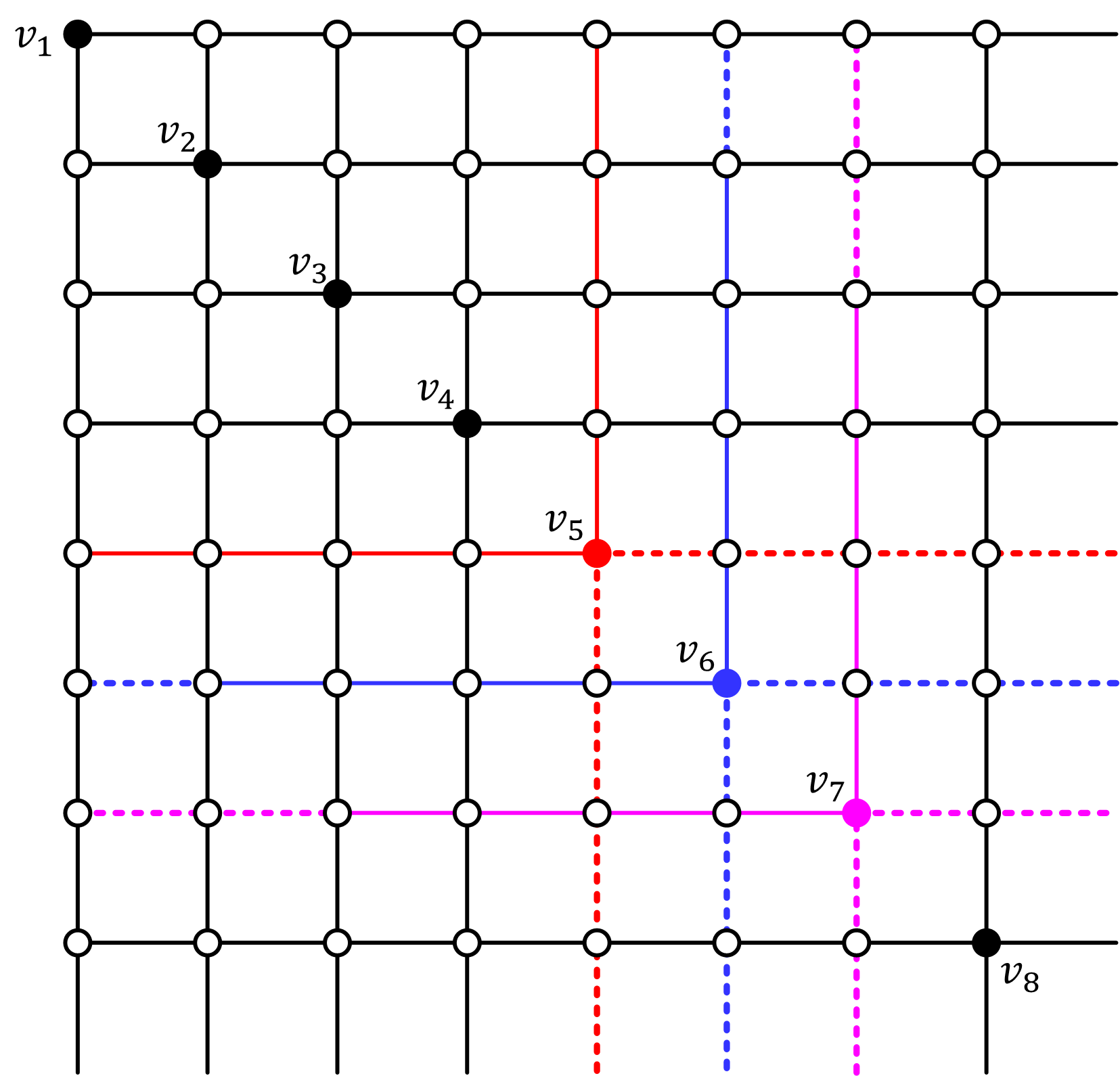}
	\caption{$v_1,\ldots, v_{8}$ are diagonal vertices of $C_8\cp C_8$. The vertex $v_5$ is the red bullet. There are two red lines. One is red solid  line and the other is red dotted  line. Both geodesics (red solid line and  red dotted line) are diametral paths such that $v_5$ is the mid point of both geodesics. The pairs of diametral paths at $v_1,\ldots, v_{8}$ partition $E(G)$.}
	\label{fig:gpe-torus1}
\end{figure}

\begin{proposition}
	\label{prop:torus-gpart_e}
	If $r\ge 2$, then $\gcovere(C_{2r}\cp C_{2r}) =  \gparte(C_{2r}\cp C_{2r}) = 4r$.
\end{proposition}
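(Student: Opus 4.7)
The plan is to sandwich both invariants between a common lower bound and an explicit construction, establishing the chain
\[
4r \;\le\; \gcovere(C_{2r}\cp C_{2r}) \;\le\; \gparte(C_{2r}\cp C_{2r}) \;\le\; 4r,
\]
which then collapses to equality. The middle inequality is immediate since every edge geodesic partition is an edge geodesic cover (cf.\ Lemma~\ref{lem:gpe-gcovere-gparte}). For the lower bound I would argue by counting: $C_{2r}\cp C_{2r}$ has $2\cdot(2r)^2 = 8r^2$ edges while $\diam(C_{2r}\cp C_{2r}) = r + r = 2r$, so every geodesic contributes at most $2r$ edges to a cover, forcing $\gcovere(C_{2r}\cp C_{2r}) \ge 8r^2/(2r) = 4r$.

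For the matching upper bound I would exhibit an explicit edge geodesic partition of size exactly $4r$, following the idea depicted in Figure~\ref{fig:gpe-torus1}. Label the diagonal vertices $v_i = (i,i)$ for $i \in \{0,1,\ldots,2r-1\}$, and attach to each $v_i$ two diametral paths that have $v_i$ as their midpoint. Concretely, $P_i^A$ first descends the \emph{northern} half of the vertical cycle through $v_i$ (from $(i, i+r)$ down to $v_i$) and then traverses the \emph{eastern} half of the horizontal cycle through $v_i$ (from $v_i$ to $(i+r, i)$); its companion $P_i^B$ traverses the complementary halves, joining the same two endpoints by the \emph{southern + western} route. Each such path has length $r + r = 2r$ and its endpoints lie at mutual distance $2r$ in $C_{2r}\cp C_{2r}$, so it is a genuine geodesic.

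The remaining task is to verify that the $4r$ paths $P_i^A, P_i^B$ with $i \in \{0, \ldots, 2r-1\}$ form a partition of $E(G)$. This reduces to two transparent facts: (i) for each fixed $i$, the eastern and western halves partition the $2r$ edges of the horizontal cycle through $v_i$, and analogously for the two halves of the vertical cycle through $v_i$, so $P_i^A$ and $P_i^B$ together cover these two cycles exactly once; (ii) for $i \ne j$ the horizontal (respectively vertical) cycles through $v_i$ and $v_j$ are edge-disjoint, and no horizontal cycle shares an edge with any vertical cycle. No step is a real obstacle; the only care needed is to fix orientation conventions consistently so that the complementary halves of the ``cross'' at $v_i$ are genuinely covered by $P_i^A \cup P_i^B$. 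Summing yields $4r \cdot 2r = 8r^2 = |E(G)|$ with no repetitions, hence $\gparte(C_{2r}\cp C_{2r}) \le 4r$, and the chain collapses to the claimed equalities.
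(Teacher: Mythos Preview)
Your proof is correct and follows essentially the same approach as the paper: the same counting lower bound via $|E(G)|/\diam(G)$, and the same diagonal-vertex construction of $4r$ diametral paths (two ``L-shaped'' geodesics through each $v_i=(i,i)$ covering the horizontal and vertical cycles meeting there). You spell out the partition verification more explicitly than the paper, which largely defers to Figure~\ref{fig:gpe-torus1}, but the argument is the same.
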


\begin{proof}
Set $G = C_{2r}\cp C_{2r}$.

Since $\gcovere(G) \geq \left \lceil m(G)/\diam(G) \right \rceil$ (cf.~\cite{Manuel19}), $\diam(G) = 2r$, and $m(G)=2 \cdot 2r \cdot 2r$, we infer that $\gcovere(G) \geq 4r$.

To prove that $\gcovere(G) \leq 4r$ we proceed by construction. Let $v_1, \ldots, v_{2r}$ be the diagonal vertices of $G$ as demonstrated in Fig.~\ref{fig:gpe-torus1}. For each diagonal vertex $v_i$ let $P'_{v_i}$ and $P''_{v_i}$ be two edge disjoint diametral paths as shown in Fig.~\ref{fig:gpe-torus1} for the vertices $v_5$, $v_6$, and $v_7$. Then $v_i$ is the midpoint of $P'_{v_i}$ and $P''_{v_i}$ which is possible because both factors of $G$ are even paths. The set of paths $\{P'_{v_i}$, $P''_{v_i} : i \in [2r]\}$ then partitions $E(G)$. Thus, $\gcovere(G) \le  \gparte(G) \le 4r$. 	
\end{proof}

\begin{figure}[ht!]
	\centering
 \includegraphics[scale=0.34]{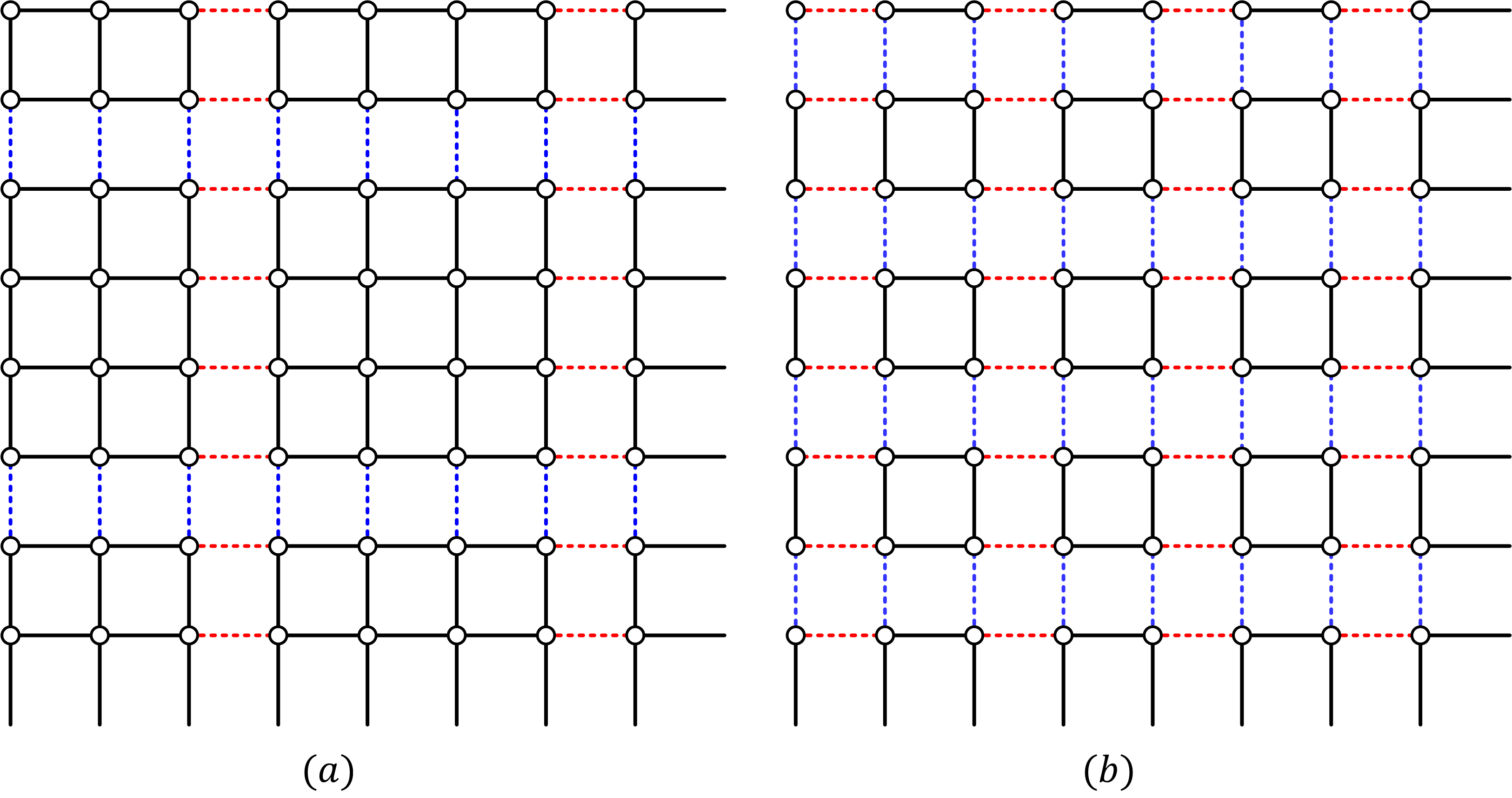}
	\caption{There are two type of edges: red dotted edges and blue dotted edges. The blue dotted edges are vertical and the red dotted edges are horizontal. (a) The red dotted edges and the blue dotted edges form an edge 3-general position set of the $8\times 8$ torus. (b) The red dotted edges and the blue dotted edges form an edge 5-general position set of the $8\times 8$ torus.}
	\label{fig:torus-3_5-gpe}
\end{figure}

\begin{theorem}
\label{thm:torus-gp_e}
If $r\ge 3$ and $2^t \le 2^{r-1} - 2$, then 
$$(2^t+1){\mbox -}\gpe(C_{2^r}\cp C_{2^r}) = 2^{r+t+1}\,.$$ 
\end{theorem}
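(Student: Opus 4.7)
The strategy is to match a straightforward upper bound with an explicit construction. Write $n = 2^r$, $k = 2^t + 1$, and $G = C_n \cp C_n$. Applying Proposition~\ref{prop:torus-gpart_e} with the cycle length taken to be $n = 2^r$ yields $\gparte(G) = 2n = 2^{r+1}$, so Lemma~\ref{lem:gpe-gcovere-gparte} immediately gives
\[
k\text{-}\gpe(G) \;\leq\; (k-1)\,\gparte(G) \;=\; 2^t \cdot 2^{r+1} \;=\; 2^{r+t+1}.
\]
Hence all the real content lies in exhibiting a matching edge $k$-general position set.

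\textbf{Construction.} Label $V(G) = \{0, 1, \ldots, n-1\}^2$ and set $c = 2^{r-t-1}$. Imitating Lemma~\ref{thm:Cycle-gp_e} simultaneously inside every horizontal cycle, take
\[
S \;=\; \bigl\{\{(jc, b),\,(jc+1, b)\}\;:\;0 \leq j \leq 2^{t+1} - 1,\ 0 \leq b \leq n-1\bigr\}.
\]
Thus $S$ consists of $2^{t+1}$ equidistantly placed horizontal edges in every row, with left endpoints at multiples of $c$, and $|S| = n \cdot 2^{t+1} = 2^{r+t+1}$.

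\textbf{The main obstacle: verifying that $S$ is an edge $k$-gp set of $G$.} The nontrivial part is upgrading the per-row property from Lemma~\ref{thm:Cycle-gp_e} to the whole torus, since a geodesic of $G$ can use horizontal edges from many different rows. Suppose, for contradiction, that $k$ edges of $S$ lie on a common geodesic $P$ of $G$. Every edge of $S$ is horizontal, and a geodesic in a torus must take all of its horizontal moves in a single direction (say, the positive one); consequently the columns that $P$ visits form a contiguous arc $A$ on $C_n$ whose length $L$ equals the number of horizontal moves of $P$. Since $P$ is a geodesic, $L \leq \diam(C_n) = n/2 = 2^{t} c$. At each column of $A$ the geodesic uses exactly one horizontal edge, and that edge lies in $S$ precisely when its (left-endpoint) column is a multiple of $c$ modulo $n$. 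But any contiguous arc on $C_n$ of length at most $2^t c$ contains at most $2^t = k - 1$ multiples of $c$, so at most $k - 1$ edges of $S$ can lie on $P$, a contradiction.

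This gives $k\text{-}\gpe(G) \geq 2^{r+t+1}$, which together with the upper bound proves the theorem. The crux of the argument is the last counting step, which is a direct pigeonhole: the diameter bound on $L$ matches exactly the spacing $c$ of our chosen edges so that precisely $k-1$ of them (and no more) can fit on any geodesic.
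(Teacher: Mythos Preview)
Your argument is correct. The upper bound via Lemma~\ref{lem:gpe-gcovere-gparte} and Proposition~\ref{prop:torus-gpart_e} is exactly as in the paper. The key verification step is also sound: a geodesic in $C_n\cp C_n$ projects to a geodesic in each factor, so its horizontal edges project bijectively onto a path of length $L\le n/2=2^{t}c$ in $C_n$; among any $L\le 2^{t}c$ consecutive left-endpoint columns there are at most $\lfloor (L-1)/c\rfloor+1\le 2^{t}=k-1$ multiples of $c$, giving the contradiction.

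Your construction, however, is genuinely different from the paper's. The paper places only $2^{t}$ equidistant edges in each horizontal cycle \emph{and} $2^{t}$ in each vertical cycle (together with all their parallel copies), and then argues as follows: no two parallel edges lie on a common geodesic, so among $k=2^{t}+1$ edges on a geodesic at least $2^{t-1}+1$ must be of the same type, say horizontal, and pairwise non-parallel; summing the spacings between these consecutive horizontal edges forces the geodesic to have length exceeding $\diam(G)=2^{r}$. Your construction instead doubles the number of horizontal edges per row to $2^{t+1}$ and uses no vertical edges at all. This asymmetry lets you bypass the pigeonhole split into horizontal/vertical and the distance-summation estimate: the whole verification reduces to counting multiples of $c$ on a single arc of the base cycle. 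The paper's symmetric construction is perhaps more natural aesthetically, but your argument is shorter and more transparent, and it makes the role of the projection $C_n\cp C_n\to C_n$ explicit.
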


\begin{proof}
The technique of the proof is parallel to the one from the proof of Theorem~\ref{thm:Cycle-gp_e}. More precisely, we set $k = 2^t+1$ and are going to construct an edge $k$-general position set $S$ of  $G$ with $|S| = (k-1) \cdot \gparte(G) = 2^t \cdot 2^{r+1} =2^{r+t+1}$.
	
	Consider a horizontal cycle of $C_{2^r}\cp C_{2^r}$, say $C^{h}$. Using Theorem \ref{thm:Cycle-gp_e}, construct a set $S^{h}$ of edges from $C^{h}$ of cardinality $2^{t}$ which contains equidistant edges $e^{h}_1$, $e^{h}_2$, $\ldots$, $e^{h}_{2^{t}}$ of the cycle, that is, $d(e^{h}_i, e^{h}_{i+1})  = d(e^{h}_j, e^{h}_{j+1}) = 2^{r-t+1}-1$ for $1 \leq i,j \leq 2^{t}\}$.  Now add all the edges of $S^{h}$ to $S$. Further, add to $S$ all the edges parallel to the edges from $S^{h}$. See Fig.~\ref{fig:torus-3_5-gpe} where the edges from $S$ are the red dotted edges. At this stage we have added to $|S|$ precisely $2^t\cdot 2^r = 2^{r+t}$ edges because there are $2^r$ parallel edges for every given edge in $C_{2^r}\cp C_{2^r}$. We proceed analogously for the vertical cycles of $C_{2^r}\cp C_{2^r}$. That is, we select one such cycle, select in it $2^{t}$ equidistant edges at the distance 	$2^{r-t+1}-1$, and add to $S$ all these edges as well as all the edges parallel to them.  At this stage, the cardinality of $S$ is doubled and thus we have ended up with $|S| = 2^{r+t+1}$. 
		
We claim that the above constructed $S$ is an edge $k$-general position set of $C_{2^r}\cp C_{2^r}$. If this is not the case, then there exists a subset $R$ of $S$ with $|R|=2^t+1$ such that all the edges of $R$ lie on a common geodesic $P$. Since no two parallel edges of a torus lie on a geodesic, we infer that $R$ does not contain any parallel edges. 	Without loss of generality we may assume that $R$ has at least as many horizontal edges than vertical edges. Since $|R|=2^t+1$, this means that $R$ contains at least $2^{t-1}+1$ horizontal edges. Let $R^h$ denote the set of all those horizontal edges, $R^h = \{e^{h}_1, e^{h}_2, \ldots, e^{h}_{s}\}$, where we know that $s \geq 2^{t-1}+1$. Set $e^h_i = u_{i}v_{i}$ and assume without loss of generality that $u_{1}, v_{1}, u_{2}, v_{2}, \ldots, u_s, v_s$ are in a non-decreasing order. (Such an order is possible as no two edges from $R^h$ are parallel.) The situation is illustrated in Fig.~\ref{fig:torus-gpe}. 
	
\begin{figure}[ht!]
\centering
\includegraphics[scale=0.5]{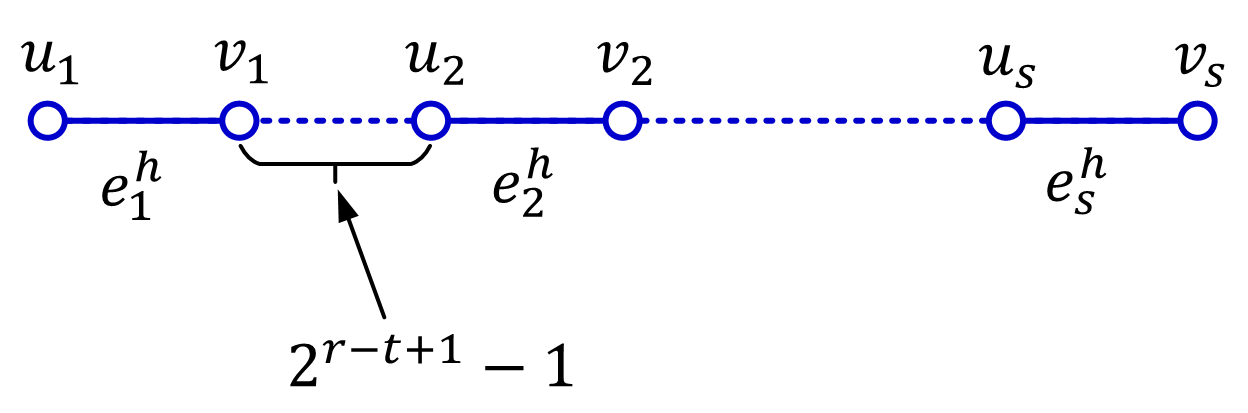}
\caption{$R^h = \{e^{h}_1, e^{h}_2, \ldots, e^{h}_{s}\}$. The vertices $u_{1}, v_{1}, u_{2}, v_{2}, \ldots, u_s, v_s$ are in increasing order. Also, $d(e^h_i, e^h_{i+1}) \ge 2^{r-t+1}-1$.}
		\label{fig:torus-gpe}
	\end{figure}

Consider now the distance $d(u_1,v_s)$ (along the geodesic $P$). We have 
\begin{align*}
d(u_1,v_s) & = (1+d(e^h_1, e^h_2)) +(1+d(e^h_2, e^h_3))  + \cdots + (1+d(e^h_{s-1}, e^h_s))+1 \\
& \ge (1+2^{r-t+1}-1)+(1+2^{r-t+1}-1) + \cdots + (1+ 2^{r-t+1}-1)+1 \\
& =  (s-1) \cdot 2^{r-t+1} + 1 \\
& \geq 2^{t-1} \cdot 2^{r-t+1} + 1 \\
& \geq 2^{r} + 1. 
\end{align*}
This leads to a contradiction because the length of the geodesic $P$ in $C_{2r}\cp C_{2r}$ is greater than $\diam(C_{2r}\cp C_{2r}) = 2^{r}$. This proves the claim and hence the theorem. 
\end{proof}

%%%%%%%%%%%%%%%%%%%%%%%%%%%%%%%
\section{The edge $k$-gp problem for partial cubes}
\label{sec:pc}
%%%%%%%%%%%%%%%%%%%%%%%%%%%%%%%

In this section we extend results on $ \gpe(G)$ of a partial cube $G$ from~\cite{MaPr21} to $k$-$ \gpe(G)$. For this sake, we first recall the concept needed. 

A graph $G$ is a {\em partial cube} if $G$ is a subgraph of some hypercube $Q_d$ such that if $x,y\in V(G)$, then $d_G(x,y) = d_{Q_d}(x,y)$. Papers~\cite{chepoi-2020, marc-2017, mofidi-2020, polat-2021} present a selection of recent developments on partial cubes. Edges $xy$ and $uv$ of a graph $G$ are in relation $\Theta$ if $d_G(x,u) + d_G(y,v) \not= d_G(x,v) + d_G(y,u)$. A connected graph $G$ is a partial cube if and only if $G$ is bipartite and $\Theta$ is transitive~\cite{wink-84}. As $\Theta$ is reflexive and symmetric on an arbitrary graph, it partitions the edge set of a partial cube into $\Theta$-{\em classes}. 

\begin{lemma}
\label{lem:k-1-classes}
Let $G$ be a partial cube, $k\ge 3$, and $F_1, \ldots, F_{k-1}$ be $\Theta$-classes of $G$. Then $\bigcup_{i=1}^{k-1} F_i$ is an edge $k$-gp set of $G$.  
\end{lemma}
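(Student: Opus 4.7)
The plan is to exploit the well-known structural property of partial cubes that any geodesic uses each $\Theta$-class at most once. Once this is established, the statement is immediate: any geodesic $P$ of $G$ satisfies $|E(P)\cap F_i|\le 1$ for every $i\in[k-1]$, and summing over the $k-1$ classes gives $|E(P)\cap \bigcup_{i=1}^{k-1} F_i|\le k-1$, so no $k$ edges of $\bigcup_{i=1}^{k-1} F_i$ can lie on a common geodesic.

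First I would justify the key fact that a geodesic of a partial cube contains at most one edge from each $\Theta$-class. One clean way is to fix an isometric embedding $G\hookrightarrow Q_d$ so that the $\Theta$-classes of $G$ are exactly the restrictions of the $d$ coordinate edge-classes of $Q_d$. Any geodesic $P$ of $G$ is also a geodesic of $Q_d$ (by the isometry), and a geodesic in $Q_d$ flips each coordinate at most once; hence $P$ contains at most one edge of each $\Theta$-class of $G$. Alternatively, one can argue directly: if edges $xy$ and $uv$ of a geodesic $P$ belonged to the same $\Theta$-class, then traversing $P$ from $x$ to $v$ along $y$ and $u$ would give $d_G(x,v)=d_G(x,y)+d_G(y,u)+d_G(u,v)$ while the $\Theta$-relation $d_G(x,u)+d_G(y,v)=d_G(x,v)+d_G(y,u)$ combined with bipartiteness forces a length contradiction.

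With this lemma in hand, the proof writes itself: let $P$ be an arbitrary geodesic of $G$. For each $i\in[k-1]$, the above fact gives $|E(P)\cap F_i|\le 1$, so
\[
\left|E(P)\cap \bigcup_{i=1}^{k-1} F_i\right| \;=\; \sum_{i=1}^{k-1}|E(P)\cap F_i| \;\le\; k-1,
\]
where the first equality uses that the $\Theta$-classes are pairwise disjoint. Thus no $k$ edges from $\bigcup_{i=1}^{k-1} F_i$ can all lie on a common geodesic, which is exactly the defining property of an edge $k$-general position set.

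The main (and essentially only) obstacle is the first step — citing or reproving that each geodesic of a partial cube meets every $\Theta$-class in at most one edge. Since this is a standard consequence of Winkler's characterization already invoked in the paper, a one-line reference to the isometric embedding into $Q_d$ should suffice, leaving the rest of the argument as a short counting observation.
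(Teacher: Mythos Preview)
Your proof is correct and essentially the same as the paper's: both hinge on the fact that a geodesic of a partial cube meets each $\Theta$-class in at most one edge (the paper cites \cite[Lemma~11.1]{HIK-2011} for this), and the remaining step is immediate. The only cosmetic difference is that the paper phrases the final deduction via pigeonhole (among any $k$ edges of $\bigcup F_i$, two share a class and hence cannot lie on a common geodesic), whereas you sum the per-class bound $|E(P)\cap F_i|\le 1$ over $i$; these are two sides of the same trivial counting argument.
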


\begin{proof}
Consider an arbitrary set $X$ of $k$ edges from $\bigcup_{i=1}^{k-1} F_i$. Then by the pigeonhole principle at least two of the edges from $X$ lie in a common $\Theta$-{\em class} $F_i$. As no two edges from $F_i$ lie on a common geodesic, cf.~\cite[Lemma 11.1]{HIK-2011} we get that the edges from $X$ do not lie on a common geodesic. We conclude that $X$ is an an edge $k$-gp set of $G$. 
\end{proof}

Lemma~\ref{lem:k-1-classes} enables us to find large edge $k$-gp sets in partial cubes. We demonstrate this claim by the following result for hypercubes. Recall that the {\em $d$-dimensional hypercube} $Q_d$, $d\ge 1$, is a graph with $V(Q_d) = \{0, 1\}^r$, and there is an edge between two vertices if and only if they differ in exactly one coordinate. 

\begin{theorem} 
\label{thm:hypecubes}
If $3\le k\le d+1$, then 
$$k{\mbox -}\gpe(Q_d) = (k-1)2^{d-1} =  (k-1) \gcovere(Q_d)  = (k-1) \gparte(Q_d) \,.$$
\end{theorem}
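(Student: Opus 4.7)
The plan is to sandwich the three quantities between $(k-1)2^{d-1}$ on both sides, using the chain
$$k\mbox{-}\gpe(Q_d) \le (k-1)\gcovere(Q_d) \le (k-1)\gparte(Q_d)$$
from Lemma~\ref{lem:gpe-gcovere-gparte}. Thus I need a lower bound of $(k-1)2^{d-1}$ on $k\mbox{-}\gpe(Q_d)$ together with a matching upper bound of $2^{d-1}$ on $\gparte(Q_d)$.

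For the lower bound on $k\mbox{-}\gpe(Q_d)$, I would apply Lemma~\ref{lem:k-1-classes} directly. Recall that $Q_d$ is a partial cube whose $\Theta$-classes are precisely the $d$ coordinate classes, each of size $2^{d-1}$. Because the hypothesis $k \le d+1$ guarantees $k-1 \le d$, I may select any $k-1$ such classes; their union is then an edge $k$-gp set of cardinality $(k-1)2^{d-1}$.

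The upper bound on $\gparte(Q_d)$ is the main content. I would exhibit an explicit partition of $E(Q_d)$ into $2^{d-1}$ diametral paths. Identify $V(Q_d)$ with $\mathbb{Z}_2^d$ and let $e_i$ denote the $i$-th standard basis vector. For every $v \in V(Q_d)$ define $P_v$ to be the walk
$$v,\, v \oplus e_1,\, v \oplus e_1 \oplus e_2,\, \ldots,\, v \oplus e_1 \oplus \cdots \oplus e_d,$$
which flips the coordinates in the fixed order $1,2,\ldots,d$; it visits $d+1$ pairwise distinct vertices and has length $d = \diam(Q_d)$, so it is a diametral geodesic. Set $\mathcal{P} = \{P_v : v \text{ has even Hamming weight}\}$, so that $|\mathcal{P}| = 2^{d-1}$. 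To check that $\mathcal{P}$ partitions $E(Q_d)$, fix an arbitrary edge $\{u, u \oplus e_i\}$: it arises as the $i$-th edge of $P_v$ precisely for the two starting vertices $v \in \{u \oplus e_1 \oplus \cdots \oplus e_{i-1},\, u \oplus e_1 \oplus \cdots \oplus e_i\}$, which differ by $e_i$ and hence have opposite weight parities. Consequently exactly one of them lies in $\mathcal{P}$. Combined with the routine bound $\gcovere(Q_d) \ge \lceil m(Q_d)/\diam(Q_d) \rceil = 2^{d-1}$ (the same bound cited in the proof of Proposition~\ref{prop:torus-gpart_e}), this forces $\gcovere(Q_d) = \gparte(Q_d) = 2^{d-1}$.

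Assembling the pieces yields $(k-1)2^{d-1} \le k\mbox{-}\gpe(Q_d) \le (k-1)\gcovere(Q_d) \le (k-1)\gparte(Q_d) = (k-1)2^{d-1}$, so equality holds throughout. The only genuinely non-trivial step is the diametral-path decomposition; once the ``start at even-weight vertices and flip the coordinates in the fixed order $1,2,\ldots,d$'' recipe is identified, the weight-parity argument makes the verification immediate, and everything else is a direct application of the preparatory lemmas.
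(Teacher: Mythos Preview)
Your proof is correct and follows essentially the same strategy as the paper: the lower bound via Lemma~\ref{lem:k-1-classes} on $\Theta$-classes, the upper bound via a geodesic partition of $E(Q_d)$ into $2^{d-1}$ diametral paths, and the sandwich through Lemma~\ref{lem:gpe-gcovere-gparte}. The only difference is that the paper imports the existence of such a partition from~\cite[Theorem~3.2]{MaPr21}, whereas you supply the explicit ``flip coordinates $1,\ldots,d$ starting at each even-weight vertex'' construction together with the parity argument that each edge is hit exactly once; this makes your argument self-contained but is otherwise the same proof.
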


\begin{proof}
It is well-known that $Q_d$ is a partial cube and that it has $\Theta$-classes $F_i$, $i\in [d]$, where $F_i$ is formed by the edges whose endpoints differ in coordinate $i$. Note that $|F_i|  = 2^{d-1}$. Then  Lemma~\ref{lem:k-1-classes} implies that $k{\mbox -}\gpe(Q_r) \ge (k-1)2^{d-1}$. As we have assumed that $k\le d+1$ and $Q_d$ contains $d2^{d-1}$ edges, this is indeed possible.  

To prove the reverse inequality we recall from the proof of~\cite[Theorem 3.2]{MaPr21} that $Q_d$  admits an isometric path edge partition consisting of $2^{d-1}$ paths. Lemma~\ref{lem:gpe-gcovere-gparte} thus implies that 
$$(k-1) \cdot 2^{d-1} \le k{\mbox -}\gpe(Q_d) \leq (k-1) \cdot \gparte(Q_d) \leq (k-1) \cdot 2^{d-1}\,,$$
hence applying Lemma~\ref{lem:gpe-gcovere-gparte} again we have equality everywhere in it for $Q_d$. 
\end{proof}

%%%%%%%%%%%%%%%%%%%%%%%%%%%%%%%%
\section{The edge $k$-gp problem for Benes networks}
\label{sec:edge-benes}
%%%%%%%%%%%%%%%%%%%%%%%%%%%%%%%%

Benes networks were presented in~\cite{Manuel2008} and have been afterwards studied from different contexts, see for instance~\cite{HsuLin08, KaAg14, Kolman02, Leighton1992, NuNaUd20}. For $r\ge 3$, the $r$-dimensional {\em Benes network} $\BN(r)$ is defined as follows. The vertex set consists of all ordered pairs $[s, i]$, where $s$ runs over all $r$-bit binary strings and $i\in \{0,1,\ldots, 2r\}$. In the {\em normal representation} of $BF(r)$, the first coordinate of the vertex is interpreted as the row of the vertex and its second coordinate is a column called {\em level} of the vertex. The vertices $[s, i]$ and $[s', i']$, where $i, i'\le r$, are adjacent if $|i-i'| = 1$, and either $s = s'$ or $s$ and $s'$ differ precisely in the $i^{\rm th}$ bit. When $i, i'\ge r$ the edges are vertically reflected (in the normal representation). The edges between level $i-1$ and level $i$ are called \textit{level $i$ edges} for $1 \leq i \leq 2r$. The formal definition should be clear from Fig.~\ref{fig:Benes-3-dim-nor}, where $\BN(3)$ is shown in its normal representation. Clearly, $\BN(r)$ has $(2r + 1)2^r$ vertices. 

\begin{figure}[ht!]
	\centering
	\includegraphics[scale=0.45]{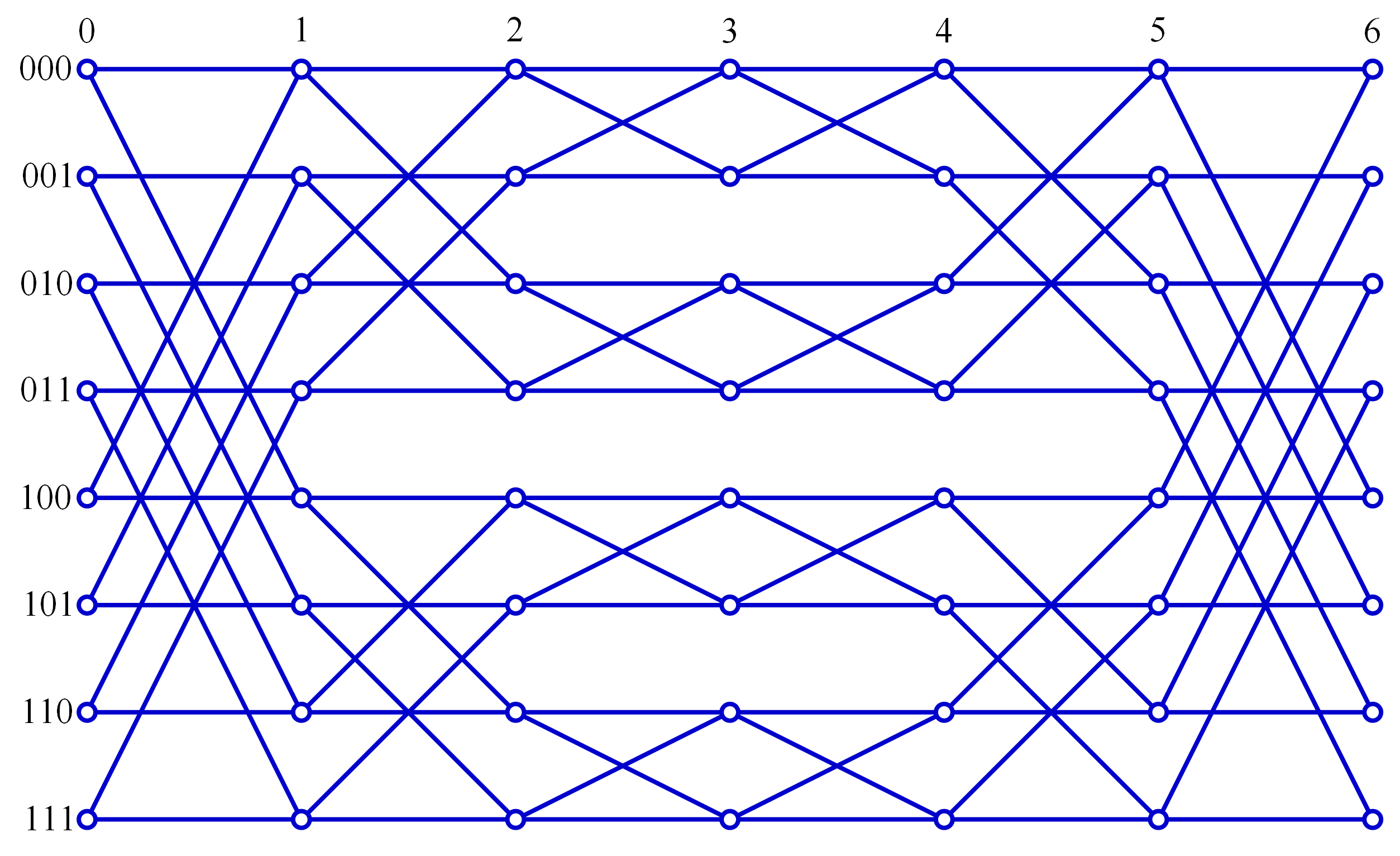}
	\caption{The Benes network $\BN(3)$ consists of back-to-back butterflies and is an edge disjoint union of two butterflies.}
	\label{fig:Benes-3-dim-nor}
\end{figure}

\begin{theorem}
 	\label{thm:BenesGcovere}
 	If $r\ge 3$, then 
 	$$\gcovere(\BN(r)) = \gparte(\BN(r)) = 2^{r+1}\,.$$
\end{theorem}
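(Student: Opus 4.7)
My plan is to sandwich both invariants between $2^{r+1}$ and $2^{r+1}$. Using $\gcovere(G)\le\gparte(G)$, it suffices to prove $\gcovere(\BN(r))\ge 2^{r+1}$ and $\gparte(\BN(r))\le 2^{r+1}$. For the lower bound I would reuse the inequality $\gcovere(G)\ge\lceil m(G)/\diam(G)\rceil$ that appeared in the proof of Proposition~\ref{prop:torus-gpart_e}. Each of the $2^r$ vertices in the lower of two consecutive levels of $\BN(r)$ has exactly two neighbours in the upper level, giving $2^{r+1}$ edges per layer; across the $2r$ layers, $m(\BN(r))=r\cdot 2^{r+2}$. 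Together with $\diam(\BN(r))=2r$, this yields $\gcovere(\BN(r))\ge 2^{r+1}$.

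For the upper bound I would construct an edge geodesic partition of $\BN(r)$ of size $2^{r+1}$. Decompose $\BN(r)$ along level $r$ into two edge-disjoint butterflies $\BF_1$ (levels $0,\ldots,r$) and $\BF_2$ (levels $r,\ldots,2r$), each carrying $r\cdot 2^{r+1}$ edges. The crucial step is to partition $E(\BF_1)$ into $2^{r+1}$ length-$r$ geodesics from level $0$ to level $r$. One concrete choice: for each $(s,b)\in\{0,1\}^r\times\{0,1\}$ let $P_{s,b}$ be the level-$0$-to-level-$r$ path starting at $[s,0]$ whose flip vector is $(b,s_1,\ldots,s_{r-1})$, meaning that at level $1$ we flip bit $1$ iff $b=1$ and at level $i\ge 2$ we flip bit $i$ iff $s_{i-1}=1$. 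There are $2^{r+1}$ such paths, exactly matching $|E(\BF_1)|/r$, so they partition $E(\BF_1)$ iff no two of them share an edge. To check this, at each level $i$ the edge used by $P_{s,b}$ is determined by the pair (vertex reached at level $i-1$, flip chosen at level $i$); tracking the recursion shows that the resulting map $\{0,1\}^{r+1}\to\{0,1\}^{r+1}$ is linear and invertible by back-substitution, hence bijective. The symmetric construction for $\BF_2$ (obtained by reversing the bit indexing) gives a parallel partition $\{Q_{t,b}\}$ of $E(\BF_2)$ into $2^{r+1}$ geodesics of length $r$.

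To finish, glue the two families at level $r$. Every level-$r$ vertex has degree $2$ in $\BF_1$, so it is the endpoint of exactly two $P_{s,b}$'s and, symmetrically, the starting point of exactly two $Q_{t,b}$'s; at each such vertex pair them up by any perfect matching. Every resulting concatenated path has length $2r=\diam(\BN(r))$ and connects a level-$0$ vertex to a level-$2r$ vertex, which are at distance $2r$, so it is automatically a geodesic. The $2^{r+1}$ concatenated geodesics partition $E(\BN(r))$, giving $\gparte(\BN(r))\le 2^{r+1}$ and closing the sandwich. I expect the main obstacle to be the bijectivity verification inside $\BF_1$; once the butterfly partition is in place, the gluing at level $r$ and the mirrored $\BF_2$ construction are routine.
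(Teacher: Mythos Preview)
Your argument is correct, and both halves differ in approach from the paper's proof.

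For the lower bound, the paper does not use the global inequality $\gcovere(G)\ge\lceil m(G)/\diam(G)\rceil$; instead it looks at the $2^{r+3}$ edges incident with the vertices of levels $0$, $r$, and $2r$ and argues that any geodesic of $\BN(r)$ contains at most four of them, forcing at least $2^{r+1}$ geodesics in any cover. Your counting argument is shorter and more elementary; the paper's argument, on the other hand, is exactly what is reused in Theorem~\ref{thm:benes-gpe} to exhibit an edge $5$-general position set of size $2^{r+3}$, so it does double duty there.

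For the upper bound, the paper simply invokes the known fact (from \cite{Manuel2008}) that $E(\BF(r))$ admits a partition into $2^r$ diametral paths, and doubles it for the two back-to-back butterflies. You instead build everything from scratch: you partition each butterfly into $2^{r+1}$ monotone level-$0$-to-level-$r$ paths of length $r$ via an explicit linear rule, verify bijectivity at every layer by back-substitution over $\mathrm{GF}(2)$, and then glue the two families at level $r$ into $2^{r+1}$ level-$0$-to-level-$2r$ paths of length $2r$, which are automatically geodesics. The resulting partitions are even structurally different: the paper's diametral paths each lie within a single butterfly, whereas yours cross from one butterfly into the other. Your version is fully self-contained but longer; the paper's is a one-line citation.
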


\begin{proof}
An alternative way to represent Benes networks is that $\BN(r)$ consists of two back-to-back butterflies $\BF(r)$, cf.~\cite{LeMa98,Manuel2008,MaKa2007}, that is, of two copies of $\BF(r)$ sharing level $r$ vertices. It is known that $\diam(\BN(r)) = \diam(\BF(r))= 2r$, cf.~\cite{LeMa98,Manuel2008,MaKa2007}, and that the edge set of $\BF(r)$ can be partitioned with respect to $2^r$ diametral paths~ \cite{Manuel2008}. It follows that the edge set of $\BN(r)$ can be partitioned with respect to $2^{r+1}$ diametral paths of $\BN(r)$. Consequently, Lemma~\ref{lem:gpe-gcovere-gparte} implies $\gcovere(\BN(r)) \le \gparte(\BN(r)) \le 2^{r+1}\,.$
	
To prove the reverse inequality, consider the set $S$ of edges which are incident to the vertices of level $0$, level $r$, and level $2r$. Then $|S| = 2\cdot 2^{r} + 4\cdot 2^{r} + 2\cdot 2^{r} = 2^{r+3}$. Since a geodesic can cover a maximum of four edges of $S$~\cite{Manuel2008}, at least $2^{r+3}/4 = 2^{r+1}$ geodesics are required to cover all the edges of $S$. Hence, $\gcovere(\BN(r)) \geq 2^{r+3}/4 = 2^{r+1}$. Thus, $\gparte(\BN(r)) \geq \gcovere(\BN(r)) \geq 2^{r+1}$.
\end{proof}

\iffalse
\begin{figure}[ht!]
	\centering
	\includegraphics[scale=0.45]{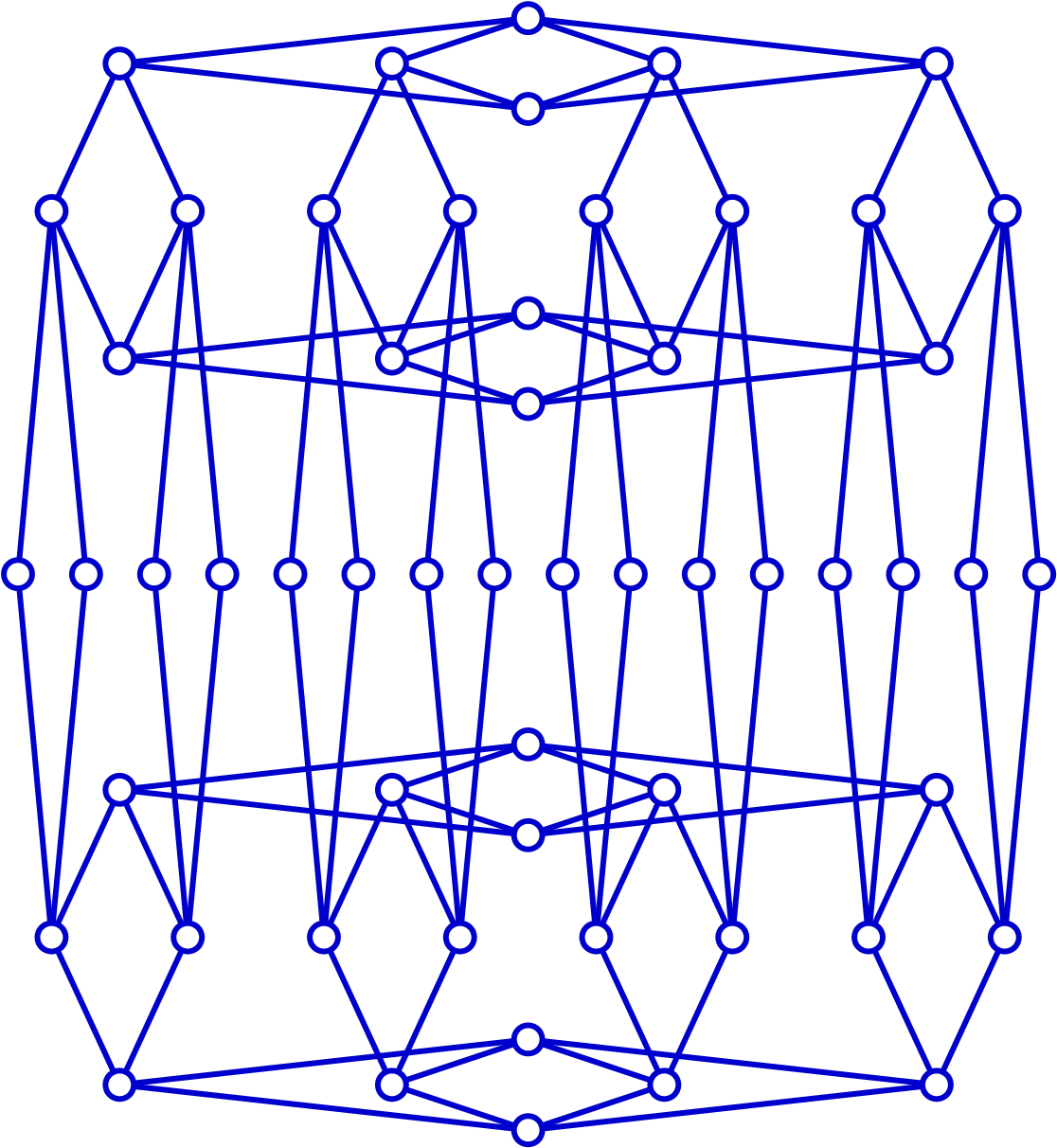}
	\caption{The diamond representation of $3$-dimensional Benes network, cf.~\cite{Manuel2008,MaKa2007}.}
	\label{fig:Benes-3-dim-diamond}
\end{figure}
\fi

\begin{theorem}
	\label{thm:benes-gpe}
If $k\in \{3,5\}$, then 
$$k{\mbox -}\gpe(BN(r)) = (k-1)\cdot 2^{r+1}\,.$$ 
\end{theorem}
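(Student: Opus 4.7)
The upper bound is for free: Lemma~\ref{lem:gpe-gcovere-gparte} combined with Theorem~\ref{thm:BenesGcovere} gives $k\text{-}\gpe(\BN(r)) \le (k-1)\cdot\gcovere(\BN(r)) = (k-1)\cdot 2^{r+1}$ for every $k \ge 3$, in particular for $k \in \{3,5\}$. The work is therefore to exhibit, for each of these two values of $k$, an explicit edge $k$-general position set of cardinality exactly $(k-1)\cdot 2^{r+1}$.

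For $k = 5$ the construction can be read off directly from Theorem~\ref{thm:BenesGcovere}. Take $S$ to be the set of all edges of $\BN(r)$ incident to a vertex at level $0$, level $r$, or level $2r$. The cardinality computation in the proof of Theorem~\ref{thm:BenesGcovere} gives $|S| = 2^{r+3} = (k-1)\cdot 2^{r+1}$, and the same proof invokes the fact that any geodesic of $\BN(r)$ covers at most four edges of $S$. This says precisely that no five edges of $S$ lie on a common geodesic, so $S$ is an edge $5$-general position set of the required cardinality.

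For $k = 3$ I would take $S$ to consist of all edges incident to a vertex at level $0$ or level $2r$. Since every such vertex has degree $2$, $|S| = 2 \cdot 2^{r+1} = (k-1)\cdot 2^{r+1}$, and it remains to show that no geodesic $P$ of $\BN(r)$ contains three edges of $S$. My plan is a case analysis on how $P$ meets each extremal level --- as an endpoint, as an interior vertex, or not at all --- driven by two structural observations in the normal representation of $\BN(r)$. First, the two level-$1$ neighbors of any level-$0$ vertex $w$ share only two level-$0$ vertices as common neighbors (namely $w$ itself and the level-$0$ vertex obtained from $w$ by flipping the first bit of its string); in particular, once $P$ contains the length-$2$ detour through an interior level-$0$ vertex, any further level-$0$ contribution --- a second interior visit, or a level-$0$ endpoint --- forces a repeated vertex in $P$ and therefore contradicts $P$ being a simple geodesic. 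The symmetric statement holds at level $2r$. Second, since $\diam(\BN(r)) = 2r$, a geodesic cannot simultaneously contribute two edges at level $0$ and two edges at level $2r$, because the forced length would be at least $2r + 2$. A brief enumeration of what remains --- the diametral case has one endpoint at level $0$ and one at level $2r$, contributing exactly one edge on each side, and the one-sided cases are immediate --- yields $|E(P) \cap S| \le 2$ in every scenario.

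The main obstacle is the $k = 3$ argument, and specifically the clean combination of the \emph{shared-neighbor structure rules out a second contribution at the same extreme} observation with the \emph{diameter bound rules out simultaneous dips at opposite extremes} length count. Each ingredient is routine in isolation, but organizing the endpoint/interior subcases at both extremes without redundancy is where the care is needed.
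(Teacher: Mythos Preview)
Your proposal is correct and follows essentially the same approach as the paper: the same upper bound via Lemma~\ref{lem:gpe-gcovere-gparte} and Theorem~\ref{thm:BenesGcovere}, and the very same edge sets for both $k=5$ (edges at levels $0$, $r$, $2r$) and $k=3$ (edges at levels $0$ and $2r$). The only difference is that for $k=3$ the paper simply cites~\cite{Manuel2008} for the fact that a geodesic meets at most two such edges, whereas you sketch a direct argument; your two ingredients (the shared-neighbour structure at an extremal level and the diameter bound $\diam(\BN(r))=2r$) are the right ones, though your observation~1 as stated tacitly assumes the second level-$0$ contact is adjacent to one of the already-used level-$1$ vertices, and ruling out a detour to higher levels and back down really falls under the length/diameter count of observation~2.
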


\begin{proof}
By combining Lemma~\ref{lem:gpe-gcovere-gparte} with Theorem~\ref{thm:BenesGcovere}, it is enough to identify an edge general position set $S$ of cardinality $2^{r+2}$ for $k=3$, and an edge $5$-general position set $S'$ of cardinality $2^{r+3}$ for $k=5$. 

For $k=3$, consider the set $S$ of edges in $\BN(r)$ which are incident to the degree $2$ vertices (in levels $0$ and $2r$). Since each level consists of $2^r$ vertices and each vertex at level $0$ and level $2r$ is of degree $2$, $|S| = 2^{r+2}$. An arbitrary geodesic contains at most two edges of $S$, cf.~\cite{Manuel2008}. Thus, $S$ is an edge general position set of $\BN(r)$ of required cardinality.
	
For $k=5$, consider the set of edges $S'$ which are incident to the vertices of level $0$, level $r$, and level $2r$. Then, as already noticed in the proof of Theorem~\ref{thm:BenesGcovere}, $|S'| = 2^{r+3}$. As a geodesic contains at most $4$ edges of $S'$, we conclude that $S$ is an edge general position set of $\BN(r)$ of required cardinality.
\end{proof}

%%%%%%%%%%%%%%%%%%%%%%%%%%%%%%%%%
\section{Conclusion}
\label{sec:conclusion}
%%%%%%%%%%%%%%%%%%%%%%%%%%%%%%%%%
In this paper, we have demonstrated that the edge geodesic cover problem and the edge $k$-general position problem form a pair of dual min-max invariant combinatorial problems. We have solved the edge $k$-general position problem completely for hypercubes and for certain cases of torus. In addition, we have solved the edge $k$-general position problem for Benes networks $\BN(3)$ and $\BN(5)$. The edge geodesic cover problem and the edge geodesic partition problem are completely solved for hypercubes, torus and Benes networks. Studying the interplay between these two concepts seems to be an interesting topic. Among other things, it would be interesting to explore these dual min-max invariant combinatorial problems for intersection graphs, subclasses of perfect graphs, and different Cayley graphs.

%%%%%%%%%%%%%%%%%%%%%%%%%%%%%
\section*{Acknowledgment}
This work was supported and funded by Kuwait University, Kuwait and the Research Project No. is FI 02/21. 
%%%%%%%%%%%%%%%%%%%%%%%%%%%%%

\end{document}